\def\biblbl{\bibitem}
\theoremstyle{plain}
\newtheorem{theorem}{Theorem}[section]
\newtheorem{thm}{Theorem}
\newtheorem{lemma}[theorem]{Lemma}
\newtheorem{corollary}[theorem]{Corollary}
\newtheorem{proposition}[theorem]{Proposition}
\newtheorem{conjecture}{Conjecture}
\newtheorem{question}{Question}
\newtheorem{definition}{Definition}
\theoremstyle{definition}
\newtheorem{remark}[theorem]{Remark}
\newtheorem{example}[theorem]{Example}
\newcommand{\bcon}{\begin{conjecture}}
\newcommand{\econ}{\end{conjecture}}
\newcommand{\bcor}{\begin{corollary}}
\newcommand{\ecor}{\end{corollary}}
\newcommand{\bdf}{\begin{definition}}
\newcommand{\edf}{\end{definition}}
\newcommand{\benu}{\begin{enumerate}}
\newcommand{\eenu}{\end{enumerate}}
\newcommand{\beq}{\begin{equation}}
\newcommand{\eeq}{\end{equation}}
\newcommand{\bexa}{\begin{example}}
\newcommand{\eexa}{\end{example}}
\newcommand{\bexe}{\begin{exercise}}
\newcommand{\eexe}{\end{exercise}}
\newcommand{\bfac}{\begin{fact}}
\newcommand{\efac}{\end{fact}}
\newcommand{\bite}{\begin{itemize}}
\newcommand{\eite}{\end{itemize}}
\newcommand{\blem}{\begin{lemma}}
\newcommand{\elem}{\end{lemma}}
\newcommand{\bmat}{\begin{pmatrix}}
\newcommand{\emat}{\end{pmatrix}}
\newcommand{\bprb}{\begin{problem}}
\newcommand{\eprb}{\end{problem}}
\newcommand{\bpro}{\begin{proposition}}
\newcommand{\epro}{\end{proposition}}
\newcommand{\bque}{\begin{question}}
\newcommand{\eque}{\end{question}}
\newcommand{\brem}{\begin{remark}}
\newcommand{\erem}{\end{remark}}
\newcommand{\bthm}{\begin{theorem}}
\newcommand{\ethm}{\end{theorem}}
\newcommand\begin{figure}
\newcommand\FIGc[3]{\begin{figure}[htpb]
    \includegraphics[height=#3]{draws/#1.eps}
    \caption{#2}
    \label{fig:#1}
    \end{figure}}
\newcommand\incl[2]{{\includegraphics[height=#1]{draws/#2.eps}}}
\newcommand{\no}[1]{}
\def\BC{\mathbb C}
\def\BN{\mathbb N}
\def\BZ{\mathbb Z}
\def\BR{\mathbb R}
\def\cP{\mathcal P}
\def\la{\langle}
\def\ra{\rangle}
\def\al{\alpha}
\def\be { \begin{equation} }
\def\ee { \end{equation} }
\def\bt{{\mathbf {t}}}
\def\cS{\mathscr S}
\def\cE{\mathcal E}
\def\bk{\mathbf k}
\def\bn{\mathbf n}
\def\cP{\mathcal P}
\def\embed{\hookrightarrow}
\def\beq{\overset \bullet =}
\def\SM{\cS(M)}
\def\SS{\cS(\Sigma)}
\def\pM{\partial M}
\def\cN{\mathcal N}
\def\fr{\operatorname{fr}}
\def\cSp{\cS^+}
\begin{document}

\title[Faithfulness of Geometric action of skein algebra]{Faithfullness of geometric action of skein algebras}

\author[Thang  T. Q. L\^e]{Thang  T. Q. L\^e}
\address{School of Mathematics, 686 Cherry Street,
 Georgia Tech, Atlanta, GA 30332, USA}
\email{letu@math.gatech.edu}


\thanks{
2010 {\em Mathematics Classification:} Primary 57N10. Secondary 57M25.\\
{\em Key words and phrases: Kauffman bracket skein module, geometric action.}}

\begin{abstract}
We show that the  action of the Kauffman bracket skein algebra of a surface $\Sigma$  on the skein module of the handlebody bounded by $\Sigma$ is faithful if and only if the quantum parameter is not a root of 1.

\end{abstract}

\maketitle

\def\cSp{\cS^+}

\def\cross{  \raisebox{-8pt}{\incl{.8 cm}{cross}} }
\def\resoP{  \raisebox{-8pt}{\incl{.8 cm}{resoP}} }
\def\resoN{  \raisebox{-8pt}{\incl{.8 cm}{resoN}} }
\def\trivloop{  \raisebox{-8pt}{\incl{.8 cm}{trivloop}} }
\def\emptyr{\raisebox{-8pt}{\incl{.8 cm}{empty}}}
\def\cR{{\mathcal R}}
\section{Introduction} 

\subsection{Kauffman bracket skein modules and algebras}   Let $\cR$ be a commutative domain with unit, equipped with a distinguished invertible element $q$. Suppose $M$ is an oriented 3-manifold with possibly non-empty boundary $\pM$. The skein module $\SM$, introduced by Przytycki~\cite{Przy} and Turaev~\cite{Turaev,Turaev2}, is  the $\cR$-module generated by isotopy classes of framed unoriented links in $M$  modulo the Kauffman relations \cite{Kauffman}
\begin{align}
\label{eq.skein0} \cross \ &= \ q\resoP + q^{-1} \resoN\\
\label{eq.loop0}  \trivloop\  &=\  (-q^2 -q^{-2})\emptyr.
 \end{align}
For details, see Section \ref{sec.skein}.

For an   oriented surface $\Sigma$ let $\SS:= \cS(\Sigma\times (0,1))$. Then $\SS$ has an $\cR$-algebra structure where the product of two framed links $\al_1$ and $\al_2$ is the result of stacking $\al_1$ above $\al_2$.  The skein algebras and modules have played an important role in low-dimensional topology and quantum topology and they serve as a bridge between classical topology and quantum topology. The skein modules have connections to the $\mathrm{SL}_2(\BC)$-character variety \cite{Bullock,PS1}, the quantum group of $\mathrm{SL}_2(\BC)$, the Witten-Reshetikhin-Turaev topological quantum field theory \cite{BHMV}, the quantum Teichm\"uller spaces, see e.g. \cite{CF,Kashaev,BW,Le:QT}, and the quantum cluster algebra theory \cite{Muller}.

\def\SH{\cS(H)} 
 
 \subsection{Main result} When  $\Sigma$ is the boundary $\pM$ of an oriented 3-manifold $M$, there is an action of $\SS$ on $\SM$ given by $\al * \beta= \al \cup \beta$ whenever $\al$ and $\beta$ are framed links, see the details in Section \ref{sec.surf}. This action plays an important role in the theory. For example, various topological quantum field theories are constructed based on this type of actions, see e.g. \cite{BHMV,BW3}. An important question is when this action is faithful. Even when $M$ is a handlebody, the question is important and was open.  
 \begin{thm}\label{thm.1}
 Suppose $H$ is handlebody of genus $g\ge 1$ and $\Sigma$ is  the boundary of $H$. Assume that the ground ring $\cR$ is a commutative domain with unit and  equipped with a distinguished invertible element $q$. 
 
 The action of $\SS$ on $\SH$ is faithful if and only if $q$ is not a root of 1, ie if and only if $q^n\neq 1$ for all positive integers $n$.
 \end{thm}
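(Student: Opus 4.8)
The plan is to prove the two directions separately, with the ``only if'' direction being the easy one. If $q$ is a root of unity, say $q^n=1$, then standard facts about the Kauffman bracket skein algebra at roots of unity produce a large central subalgebra (the image of the Chebyshev--Frobenius homomorphism), and one expects to find a nonzero central element of $\SS$ that kills $\SH$; concretely, taking Chebyshev polynomials $T_N$ of the core curves of $\Sigma$ for an appropriate $N$ depending on the order of $q$ yields skeins that act on $\SH$ in a way one can compute explicitly and show to be non-injective, or alternatively one exhibits a nonzero skein whose action on every framed link in $H$ vanishes after applying the skein relations at that specialization. So the real content is the ``if'' direction: assuming $q$ is not a root of unity, the action is faithful.

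For the ``if'' direction I would set up a basis argument. First I would recall that $\SH$ has an $\cR$-basis indexed by isotopy classes of multicurves in the handlebody (equivalently, after retracting $H$ to a wedge of $g$ circles, by $g$-tuples of nonnegative integers, or more precisely by the appropriate multicurve combinatorics), and that $\SS$ likewise has a basis indexed by multicurves on $\Sigma$. The key is to find, for each basis multicurve $\al$ on $\Sigma$, a basis element $\beta_\al$ of $\SH$ such that $\al * \beta_\al$ has a ``leading term'' (with respect to a carefully chosen ordering, e.g. by geometric intersection number with a fixed collection of arcs, or by a lexicographic/filtration order on the multicurve coordinates) that is distinct for distinct $\al$, with an invertible coefficient that is a power of $q$ times a sign. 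Given such a triangular structure, any nonzero element $\sum c_\al \al \in \SS$ would, by looking at the maximal $\al$ in the support, act nontrivially on a suitable $\beta$, forcing faithfulness.

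The mechanism for producing the leading term is the standard product-to-sum / resolution computation: when a curve on $\Sigma$ is pushed into $H\times(0,1)$ and then into $H$, it meets $\beta$ in some number of points, and resolving all crossings via \eqref{eq.skein0} and removing trivial loops via \eqref{eq.loop0} expresses the result in the multicurve basis; the extremal resolution (all positive, say) contributes the leading term with coefficient a power of $q$, while the other terms are strictly lower in the ordering. Here is where $q$ not being a root of unity is used: the coefficient of the leading term, and more importantly the coefficients that arise when comparing the leading terms of different $\al$'s after expressing everything in a common basis, must be nonzero, and these are (up to sign) powers of $q$ times quantum-integer-type factors coming from \eqref{eq.loop0}, which vanish precisely when $q$ is a root of unity. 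The main obstacle I anticipate is the bookkeeping for genus $g>1$: one must choose the test elements $\beta_\al$ and the ordering on multicurves so that the triangularity is genuinely strict and compatible across all of $\SS$ simultaneously (a naive choice will have ties), and proving that the relevant structure constants are exactly monomials in $q$ up to sign — rather than more complicated polynomials that might accidentally vanish — requires a clean normal-form lemma for the skein algebra of the handlebody and careful tracking of how loops around the handles interact. This is the step where I would expect to invest most of the work, likely by reducing to the once-punctured-torus or pair-of-pants building blocks and inducting on a pants decomposition of $\Sigma$.
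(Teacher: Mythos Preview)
Your overall architecture---split into the two directions, use Chebyshev polynomials at roots of unity, and use a filtration/leading-term argument for the generic case---matches the paper's. But there is a genuine conceptual gap in your ``if'' direction, and your diagnosis of where the hypothesis ``$q$ is not a root of unity'' enters is wrong.

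The problem is that you cannot arrange for distinct multicurves $\alpha$ on $\Sigma$ to have \emph{distinct} leading terms in $\SH$. Simple diagrams on $\Sigma$ are parameterized by Dehn--Thurston coordinates $(\bn,\bt)$ with roughly $6g-6$ parameters, while simple diagrams on $F$ (a spine for $H$) are parameterized by edge coordinates $\bn$ with only $3g-3$ parameters. When you act $C(\bn,\bt)$ on $S(\bm)$ and pass to the associated graded, the leading term is $\overline{S(\bn+\bm)}$ with coefficient a unit times $q^{2\bt\cdot\bm}$; the twist coordinates $\bt$ are invisible in the leading \emph{basis element} and survive only in the exponent of $q$. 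So infinitely many distinct $\alpha$'s share the same leading term for any fixed $\bm$, and no choice of ordering or of test element $\beta_\alpha$ will make the triangularity strict in the sense you want. The ``ties'' you anticipate are not a bookkeeping nuisance to be resolved by induction on a pants decomposition; they are forced by dimension.

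What the paper does instead is accept these ties and separate them by \emph{varying} $\bm$: if $\sum_j c_j' q^{2\bt_j\cdot\bm}=0$ for all $\bm$ in a full-rank submonoid, then a Vandermonde argument forces all $c_j'=0$, and this is exactly where ``$q$ not a root of unity'' is used. There are no quantum-integer factors in the leading coefficients---they are monomials in $q$ and never vanish, even at roots of unity---so your stated mechanism for using the hypothesis is incorrect. (A secondary technical point: arbitrary simple diagrams on $\Sigma$ do not obey the clean product formula in the associated graded; the paper first multiplies by a power of $\partial F$ to land in the ``triangular'' regime where the formula holds.)

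For the ``only if'' direction, your sketch is in the right spirit but misses the key geometric input: you need a simple closed curve $\alpha\subset\Sigma$ that is nontrivial in $\Sigma$ but bounds a disk in $H$. Then transparency of $T_N^{\mathrm{fr}}(\alpha)$ (from \cite{BW1,Le:Che}) immediately gives that $T_N^{\mathrm{fr}}(\alpha)-2(-1)^N$ is a nonzero element of $\SS$ acting trivially on $\SH$; no explicit computation of the action is needed.
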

 
 The result has already found application in the work of J. Cooke and P. Samuelson in their study of double  affine  Hecke  algebras \cite{CS}.
 
 To prove that when $q$ is a root of unity the action of $\SS$ on $\SH$ is not faithful we rely on a result of \cite{BW1,Le:Che} which states that if one threads a framed knot in $H$ by the Chebyshev polynomial $T_N$ of type 2, where $N$ is the order of $q$, then the result is transparent with respect to disjoint union, see Section \ref{sec.proof}. This allows to  construct a non-zero element in $\SS$ which acts trivially on $\SH$.
 
To show that when $q$ is not root of 1 the action of $\SS$ on $\SH$ is faithful we use filtrations of $\SS$ and $\SH$ coming from a pair of pants decomposition of $\Sigma$ and the Dehn-Thurston coordinates of curves on $\Sigma$. The action of the corresponding associated algebra on the associated module is much easier to work with and can be calculated explicitly in ``stable" cases which are enough to prove the faithfulness of the action. R.~Detcherry  informed me that he has an independent proof of the faithfulness for the case when $1-q^n$ is invertible in $\cR$ for all positive integers $n$. 

\subsection{Organization of the paper} We recall the definitions and basic facts of skein modules in Section \ref{sec.skein}. In Section \ref{sec.surf} we  introduce filtrations on skein algebras and recall a result from \cite{FKL}  calculating the product of triangular simple diagrams, introduced in \cite{PS2}, in the associated algebras. Section \ref{sec.proof} contains the proof of the main theorem. 

\subsection{Acknowledgements}The author would like to thank R. Detcherry, P. Samuelson, and R. Santharoubane for helpful discussion. This work is supported in part by NSF grant DMS-1811114.  

\section{Skein modules and algebras} \label{sec.skein}
\def\SF{\cS (F)}
\def\bF{\bar F}
\def\KF{\SF}
\def\cC{\mathcal C}

We recall here the definition of the Kauffman bracket skein module and algebra and their basic properties, including a description of 
the basis in the case of surfaces.

\subsection{Ground ring $\cR$} Throughout the paper the ground ring $\cR$ is a  commutative domain with unity 1, equipped with a distinguished invertible element $q$. The reader might think of $\cR$ as the field $\BC$ of complex number and $q$ is a non-zero complex number, or $\cR=\BZ[q^{\pm 1}]$, where $\BZ$ is the set of integers. We use $\BN$ to denote the set of non-negative integers. Thus our $\BN$ contains 0.
 
\subsection{Skein module} Let $M$ be an oriented 3-manifold, with possibly non-empty boundary.
A {\em framed link}  in  $M$ is an embedding of  a disjoint union of oriented annuli in  $M$. 
By convention the empty set is considered as a framed link with 0 components and is isotopic only to itself.
The orientation of a component of a framed link corresponds to choosing a preferred side to the annulus.

The Kauffman bracket skein module of $M$, denoted by $\SM$, is the $\cR$-module  freely spanned by all isotopy classes of framed links in $M$ subject to the following {\em  relations}
\begin{align}
 \cross \ &= \ q\resoP + q^{-1} \resoN   \label{KBSR}\\
  \trivloop\  &=\  (-q^2 -q^{-2})\emptyr.  \label{KBSR2}
\end{align}
Here the framed links in each expression are identical outside the balls pictured in the diagrams. The skein relations were introduced by Kauffman \cite{Kauffman} where he showed that when $M=\BR^3$, the three-space, one has $\SM= \cR$, and the value of a framed link is in $\cR$ is the framed version of the Jones polynomial \cite{Jones}.  Kauffman bracket skein modules for oriented 3-manifolds were introduced independently by Turaev and Przytycki \cite{Przy,Turaev} in an attempt to generalize the Jones polynomial to general 3-manifolds. Turaev \cite{Turaev2} used the algebra structure, see below, to quantize the Goldman bracket of the surface.

\def\sS{\mathscr S}

\subsection{Skein algebra of surfaces} Suppose $F$ is an oriented surface, with possibly non-empty boundary. Define $\SF = \cS(F\times (0,1))$. The $\cR$-module $\SF$ has an algebra structure coming from stacking.  More precisely, the product of two links is defined by placing the first link above the second in the direction given by the interval $(0,1)$. The empty link is the unit.

If $D$ is a non-oriented link diagram on $F$ then a regular neighborhood of $D$ is a framed link, defined uniquely up to isotopy. We identify $D$ with the isotopy class of the framed link it defines.  Any framed link in $F \times (0,1)$ is isotopic to a framed link determined by a link diagram. A {\em simple diagram} on  $F$ is a link diagram with no crossings and no {\em trivial loops}, i.e. a curve bounding a disk.  We consider the empty set as a simple diagram which is isotopic only to itself.

Let $B(F)$ be the set of all isotopy classes of simple diagrams on $F$. It is known 
 \cite{PS1} that $\SF$ is a free $\cR$-module with basis $B(F)$.
Every  non-zero $ \al \in \KF$ has a unique {\em standard presentation}, which is a finite sum
\be
\label{eq.pres} 
\al = \sum_{j\in J} c_j \al_j,
\ee
where $c_j\neq 0$ and $\al_j\in B(F)$ are distinct. 

\def\kinkp{  \raisebox{-8pt}{\incl{.8 cm}{kinkp}} }
\def\kinkn{  \raisebox{-8pt}{\incl{.8 cm}{kinkn}} }
\def\kinkzero{  \raisebox{-8pt}{\incl{.8 cm}{kinkzero}} }

If a crossing of a link diagram $D$ is as in the left hand side of \eqref{KBSR}, then 
the first (resp. second) diagram on the right hand side will be referred to as the $+1$ (resp. $-1$) {\em smooth resolution} of the left hand side diagram at the crossing.

Suppose $D$ is a link diagram on $F$. Let $\cC$ be the set of all crossings. For every map $\sigma: \cC \to \{\pm1 \}$ let $D_\sigma$ be the non-crossing diagram obtained from $D$ by doing the  $\sigma(c)$ smooth resolution at every crossing $c$.
Let $|\sigma|= \sum_{c\in \cC}  \sigma(c)$ and $l(\sigma)$ be the number of trivial components of $D_\sigma$. Let $D_\sigma'$ be the simple diagram obtained from $D_\sigma$ by removing all the trivial components. Using the two  skein relations, we have 
\be 
\label{eq.prod}
D = \sum_{\sigma: \cC \to \{\pm 1\}} q^{|\sigma|} (-q^2 - q^{-2})^{l(\sigma)} \, D_\sigma' \quad \text{in } \KF.
\ee
 
From the defining  relations, one can easily derive the following relation:
\begin{align}
 -q^{-3} \kinkp \ &= \ \kinkzero\  = \ -q^3\kinkn \label{eq.kink}
\end{align}

\section{Curves on surfaces and coordinates}\label{sec.surf}

\def\pF{\partial F}
\def\oF{\mathring F}

We recall how to coordinatize the system of simple diagrams on surfaces in both cases when the surface has boundary and when the surface is closed. We introduce filtrations on the skein algebra of a surface using a pair of pants decomposition, and explain how the product in the associated algebra look like for a class of so-called triangular curves.

\subsection{Edge coordinates and Thurston form on compact non-closed surfaces}
\label{sec.edge}

Suppose $F$ is a compact connected oriented surface with non-empty boundary $\pF$ and  $\chi(F)<0$, where $\chi(F)$ is the Euler characteristic. 
Then $\oF=F \setminus \pF$ is known as a {\em hyperbolic punctured surface}.  We will now  introduce  the notion of {\em $\pF$-triangulation} of $F$, which is essentially the ideal triangulation of $\oF$.

A {\em $\pF$-arc} is a proper embedding $[0,1] \embed F$. {\em Isotopies of $\pF$-arcs} are considered in the class of $\pF$-arcs. A $\pF$-arc is {\em trivial} if it can be isotoped into a small neighborhood of the boundary $\pF$. A {\em $\pF$-triangulation} $\cE$ of $F$ is a maximal collection of pairwise disjoint and pairwise non-isotopic non-trivial $\pF$-arcs. An element $a\in \cE$ is called an {\em edge} of the triangulation. 
It is easy to show that $|\cE|=- 3 \chi(F)$. A connected component of $F \setminus \left( \bigcup_{e\in \cE} e  \right)$ is called a {\em face} of the triangulation. An edge $e\in \cE$ is a {\em side } of a face $\tau$ if the topological closure of $\tau$ contains $e$. An edge is {\em self-folded } if it is the side of only one face.
There are always triangulations of $F$ without self-folded edges. For simplicity we will assume that the triangulation $\cE$ does not have any self-folded edge, although all the facts in this paper can be easily adapted to the case when the triangulation has self-folded edges.

For an edge $e\in \cE$ and a simple diagram $\al$ on $F$ the {\em geometric intersection number} $I(e, \al)$ is the minimum of the  cardinalities of $\al'\cap e$, where $\al'$ run the set of simple diagrams isotopic to $\al$. 

Three distinct edges $a,b,c\in \cE$ are {\em triangular} if they are sides of a face. Recall that $B(F)$ is the set of isotopy classes of simple diagrams on $F$. We use the notation $X^Y$ for the set of all functions from $Y$ to $X$. 
Define the map
$$ \nu: B(F) \to \BN^\cE \quad \text{
by} \   
 \nu(\al)(e) = I(\al, e).
$$
 It is known that $\nu$ is injective, and its image is the submonoid $\Lambda=\Lambda(F,\cE)\subset \BN^\cE$ consisting of functions $f: \cE \to \BN$ such that
whenever $a,b,c$ are triangular, we have
 $f(a)+f(b)+f(c)$ is even and
 $f(a)\le f(b)+f(c)$.  For details, see e.g.~\cite{Matveev}. We will denote 
 $$ S: \Lambda(F,\cE) \to B(F)$$
 the inverse function of $\nu$.

\def\bm{{\mathbf m}}
Associated to the triangulation $\cE$ is an anti-symmetric function
$$ Q : \cE \times \cE \to \BZ,$$
known as the Thurston form, which determines the Poisson structure on the Teichm\"uller space of $\oF$. For details see e.g. \cite{BW,FoG,FST,Penner}. Although the exact definition is not important for us, let us recall  
the definition of $Q$. Each face $\tau$ of the triangulation is planar, hence there is defind a cyclic order of the 3 sides of $\tau$, using the counter clockwise orientation of the boundary of $\tau$. For two edges $a,b\in \cE$ and a face $\tau$ define $Q_\tau(a,b)=0$ if one of $a,b$ is not a side of $\tau$, and otherwise define
$Q_\tau(a,b)=1$ or $-1$ according as $b$ follows $a$ along the cyclic order or not. Then
$$ Q(a,b) = \sum_{\tau:\  \text{ faces }} Q_\tau(a,b).$$
The bilinear form $\la \cdot, \cdot \ra_Q: \BZ^\cE \times \BZ^\cE \to \BZ$ defined by
\be 
\la \bn , \bm \ra_Q := \sum_{a,b\in \cE} Q(a,b) \bn(a) \bm(b) 
\label{eq.Q}
\ee
plays an important role in the theory of Teichm\"uller spaces of $\oF$.

\def\da{\boldsymbol{\delta}_a}
\def\tte{{\tilde e}}
\def\cP{{\tilde {\cE}}}

\subsection{Dehn-Thurston coordinates for curves on closed surfaces}\label{sec.DT} 
Suppose $\Sigma$ is a closed oriented surface of genus $g>1$. Then $\Sigma$ is the double of a compact surface $F$ with $\chi(F)=1-g$ with non-empty boundary $\pF$. This means $\Sigma= F \sqcup F'/\sim$, where $F'$ is a copy of $F$ and $x\sim x'$ for $x\neq x'$  if and only if $x\in \pF$ and $x'$ is the corresponding point of $x$ in $\partial F'$. By a {\em Dehn-Thurston datum} of $\Sigma$ we mean a pair $(F, \cE)$, where  $\Sigma$  is the double of $F$ and $\cE$ is a $\pF$-triangulation of $F$.

For each $e\in \cE$ let $e'\subset F'$ be the corresponding $\pF'$-arc of $F'$. 
Then $\tte = e\cup e'$ is a simple closed curve in $\Sigma$. The collection $\cP= \{ \tte \mid e\in \cE\}$ is a {\em pair of pants decomposition} of $\Sigma$.

Recall that $B(\Sigma)$ is the set of isotopy classes of simple diagrams on $\Sigma$. The Dehn-Thurston datum $(F, \cE)$ gives rise to the Dehn-Thurston coordinate map 
$$ \mu: B(\Sigma) \xrightarrow{\cong  } \Gamma \subset \BZ^{\cE \cup \cE^*},$$
where $\cE^*=\{ e^* \mid e \in \cE\}$ is a copy of $\cE$.  The set $\Gamma$ is a submonoid of $\BZ^{\cE \cap \cE^*}$.  By definition $\mu(\al)(e)= I(\al, \tte)$  for $e\in \cE$. The definition of $\mu(\al)(e^*)$ for $e^*\in \cE^*$, called the {\em twist coordinate corresponding to} $\tte$, is more involved, and since we we don't need the exact definition  we refer the reader to \cite{LS} for details. We will recall here a couple of properties of the Dehn-Thurston coordinates that we will need later.

Firstly suppose $\al\subset F$. One can consider $\al$ is a simple diagram on $F$ and  define $\nu(\al):\cE\to \BN$ as in Subsection \ref{sec.edge}. Then the restriction of $\mu(\al)$ onto $\cE$ is equal to $\nu(\al)$ while the restriction of $\mu(\al)$ onto $\cE^*$ is the 0 function. In other words, 
 $\mu(\al)(e)= \nu(\al)(e)$ for all $e\in \cE$ and $\mu(\al)(e^*)=0$ for all $e^*\in \cE^*$, i.e. all the twist coordinates of $\al$ are 0.

Secondly  suppose $\al$ is a simple diagram on $\Sigma$ and $e\in \cE$ such that $I(\al,\tte) =n>0$. 
One defines a simple digram $T_e(\al)$ as follows. A closed tubular neighborhood $A(\tte)$ of the simple closed curve $\tte$ is homeomorphic to an annulus $S^1 \times [0,1]$ where $\tte$ is identified with $S^1 \times \{1/2\}$. A segment of the form $z \times [0,1]$, where $z\in S^1$, is called a vertical line in $A(\tte)$.
 We assume that $\al$ intersects $A(\tte)$ in $n$ vertical lines. 
Then $T_e(\al)$ is the the result of twisting the $n$ vertical lines along 
 $\tte$ by $2\pi/n$, see Figure \ref{fig:Twist}. 
The twist coordinates are defined so that 
\be \mu(T_e(\al))(e^*) = \mu(\al)(e^*) +1. \label{eq.Twist}
\ee

\FIGc{Twist}{The map $T_e$. Here $n=4$.The cylinder is a tubular neighborhood $A(\tte)$ of $\tte$. 
The red lines are part of $\pF$.}{2cm}

 Let $S: \Gamma \to B(\Sigma)$ be the inverse of $\mu$.

Following \cite{PS2}, a simple diagram $\al$ is called {\em $\cE$-triangular} if it can be obtained from a simple diagram $\beta$ in $F$ by repeatedly applying the map $T_e$ or $(T_e)^{-1}$, with possibly many $e\in \cE$. In other words, $\al$ is triangular if 
\begin{itemize}
\item whenever $a,b,c\in  \cE$ are triangular one has $\mu(\al) (a) \le \mu(\al)(b) + \mu(\al)(c)$, and
\item whenever  $\mu(\al)(e)=0$  one has $\mu(\al)(e^*)=0$.
\end{itemize}
 
We will see later that the product of triangular diagrams behaves well in the filtration defined by the system $\{\tte\}$ .

\subsection{Torus} \label{sec.Torus}
Suppose $\Sigma$ is the torus, i.e. $g=1$. The skein algebra $\SS$ in this case is well understood; it is a subalgebra of a quantum torus \cite{FrG} and many of its properties can be deduced from this fact.

Note that $\Sigma$ is the double of an annulus $F= S^1 \times [0,1]$. We excluded this case in the previous discussion since technically there is no triangulation of $F$. However, in order to uniformly treat all closed surfaces, we use the following convention. A {\em $\pF$-triangulation} $\cE=\{e\}$ of $F$ consists of one $\pF$-arc $e$ of the form $\{x\} \times [0,1]$ where $x\in S^1$. The simple closed curve $\tte$ will be called a {\em meridian} of $\Sigma$, while the curve $S^1 \times \{1/2\} $ on the annulus $F$ will be called a {\em longitude} of $\Sigma$. Again a Dehn-Thurston datum of $\Sigma$ is the pair $(F, \cE)$.

The set $B(F)$ of isotopy classes of simple diagrams on $F$ is parameterized by $\Lambda=\BN$, where $S(m)$ is the diagram consists of $m$ parallel copies of the longitudes. The anti-symmetric form $Q: \cE \times \cE\to \BZ$ is defined by $Q(e,e)=0$.

The twist $T_e$ is defined exactly as in the previous case.
The set $B(\Sigma)$ is parameterized by 
$$\Gamma=\{ (n,t)\in \BN\times \BZ \mid  \text{if} \ n=0 \ \text{then} \ t\ge 0\}.$$
Here $S(n,t)= (T_e)^{t}(S(n))$ if $n >0$ and $S(0,t)$ is the disjoint union of $t$ copies of the meridian $\tte$. The simple diagram $S(n,t)$ is defined to be {\em triangular} if $n>0$ or $n=0$ and $t=0$.

\def\Gr{\mathrm{Gr}   }
\def\SF{{\cS(F)}}
\def\SH{\cS(H)}
\def\bm{{\mathbf{m}}}
\def\pH{\partial H}

\subsection{Action of $\SS$ on $\SH$} \label{sec.action}
Suppose $\Sigma$ is the boundary of an compact oriented 3-manifold $M$.
We identify a closed  tubular neighborhood $\cN(\pM)$ of $\pM$ with $\Sigma\times [0,1]$ so that $\pM = \Sigma\times \{0\}$. Every framed link  in $M$ can be isotoped off $\cN(\pM)$.
If $\al$ is a framed link in $\Sigma\times (0,1)$ and $\beta$ is a framed link in $M \setminus \cN(\pM)$ then define $\al * \beta := \al\cup \beta$ as an element of $\SM$. This defines an action of $\SS$ on $\SM$.

Suppose now $F$ is a compact oriented connected surface with non-empty boundary $\pF$ and with Euler characteristic $\chi(F) =1-g$. Then $H= F \times [0,1]$ is a handlebody of genus $g$. By definition, we identify $\SH$ with $\SF$. 

Note that $\Sigma:=\pH$ is the double of $F$.  The embedding $\iota: F \embed \Sigma$ gives rise to an algebra embedding $\iota_*: \SF \embed \SS$. 
For $\al\in \SS$ and $\beta \in \SF=\SH$, from 
the definition one has
  
\be \al * \beta  = [\al \,  \iota_*(\beta)]  * 1. 
\label{eq.action}
  \ee
  
  \def\bal{\bar \al}
  \def\hO{\bar \Omega}
  \def\bB{\bar B}

  \subsection{Filtrations}\label{sec.fil}
  
   Return to a closed surface of genus $g\ge 1$, equipped with a Dehn-Thurston datum $(F,\cE)$.

  For $k\in \BN$ let $F_k(\SS)\subset \SS$ be the $\cR$-submodule spanned by $\al\in B(\Sigma)$ such that 
  $$\sum_{e\in \cE} I(\al, \tte) \le k.$$
  Then $\{ F_k(\SS) \}_{k=0}^\infty $ is an exhaustive increasing filtration of $\SS$, compatible with the product structure. This type of filtrations of skein aglebras and modules are used in many works, for example \cite{Le:Che,Marche,FKL,PS2}.
  
 We consider $\SS$ as an $\BN$-filtered algebra with the filtration $\{ F_k(\SS) \}_{k=0}^\infty $. Let $\Gr \SS$ denote the $\BN$-graded associated algebra. By definition
  $$ \Gr \SS = \bigoplus_{k=0}^\infty F_k(\SS) / F_{k-1}(\SS), 
  $$
  where by convention $F_{-1}(\SS)$ is the 0 module. Suppose $x\in \SS$ is non-zero. Then there is $k\in \BN$, called the {\em degree of $x$}, such that $x\in F_k(\SS) \setminus F_{k-1}(\SS)$.
   Let 
   $$\bar x\in  F_k(\SS) / F_{k-1}(\SS) \subset \Gr \SS$$ be the image of $x$ under the natural projection $  F_k(\SS)  \to F_k(\SS) / F_{k-1}(\SS)$.

Let $B_k(\Sigma) \subset B(\Sigma)$ be the subset consisting of isotopy classes of simple diagrams $\al$ with $\sum_{e\in \cE} I(\al, e) \le k$. Then 
 $F_k(\SS)$ is free $\cR$-module with basis $B_k(\SS)$. It follows that the set
 $$ \bB(\Sigma): =\{ \bal \mid \al \in B(\Sigma)\}$$
 is a free basis of the $\cR$-module $\Gr\SS$.

  Similarly let $F_k(\SF)\subset \SF$ be the $\cR$-submodule spanned by $\al\in B(F)$ such that 
  $$\sum_{e\in \cE} I(\al, e) \le k.$$
  Then $\{ F_k(\SF) \}_{k=0}^\infty $ is an exhaustive increasing filtration of $\SF$. Let $\Gr\SF$ be the corresponding $\BN$-graded $\cR$-module. Again for a non-zero element $x\in \SF$ one can define $\bar x \in \Gr \SF$.The set
  $$ \bB(F): =\{ \bal \mid \al \in B(F)\}$$
 is a free basis of the $\cR$-module $\Gr\SF$.
  
  The $\BN$-filtration $\{ F_k(\SF) \}_{k=0}^\infty $  of $\SF$ is compatible with the $\BN$-filtration $\{ F_k(\SS) \}_{k=0}^\infty $ under the action of $\SS$ on $\SF$. In other words, 
if $\al\in F_k(\SS)$ and $\beta \in F_m(\SF)$, then (from the definition) one has $\al * \beta \in F_{k+m}(\SF)$. It follows that we have an associated action of $\Gr\SS$ on $\Gr\SF$.

Recall that we have defined triangular simple diagrams on a closed surface $\Sigma$ equipped with a Dehn-Thurston datum $(F,\cE)$, see Subsections \ref{sec.DT} and \ref{sec.Torus}. Suppose $\bn\in \BZ^\cE$ and $\bt\in \BZ^{\cE^*}$, then $(\bn,\bt)\in \BZ^{\cE \cup \cE^*}$, and we say that $(\bn,\bt)$ is {\em triangular} if $(\bn,\bt) \in \Gamma$ and  the simple diagram $C(\bn,\bt)$, which has Dehn-Thurston coordinates $(\bn,\bt)$, is triangular.
We say an element $x\in \Gr\SS$ is {\em triangular} if it is an $\cR$-linear combination of $\overline{C(\bn_i, \bt_i)  }$, where each $(\bn_i, \bt_i)\in \Gamma$ is  triangular.

Let $\Omega=\pF$, considered as an element of $\SS$. As $\Omega\subset F$, it is a triangular simple diagram.
We have the following result of \cite{PS2}.
\bpro  \label{r.PS2}
Suppose $\Sigma=\Sigma_g$ is a surface of genus $g$, equipped with Dehn-Thurston datum $(F, \cE)$. 

(a) The associated algebra $\Gr \SS$ is a domain.

(b) Suppose $x\in \Gr \SS$. For sufficiently large integers $k$ the elements $\hO^k x $ are triangular.

\epro

The reason why we consider triangular simple diagrams is that the product of two triangular simple diagrams in the associated algebra is very simple, as given in the following proposition, a refinement of a result of \cite{PS2}.

\bpro [Proposition 4.9 of \cite{FKL}]
Suppose $(\bn,\bt), (\bn', \bt')\in \Gamma$ are  triangular. Then 
\be 
\overline {C(\bn, \bt)} \, \, \overline {C(\bn', \bt')} = q^{\la \bn, \bn'\ra_Q + \bt\cdot \bn' - \bt' \cdot \bn }  \overline {C(\bn+\bn', \bt+ \bt')}.
\label{eq.1b}
\ee
\epro
Here $\la \bn, \bn'\ra_Q$ is a  bilinear form defined in Subsection \ref{sec.edge}, and $\bt\cdot \bm$, for $\bt\in \BZ^{\cE^*}$ and $\bm\in \BZ^\cE$, is the dot product defined by
$$ \bt\cdot \bm = \sum_{e\in \cE} \bt(e^*) \bm(e).$$

 \section{Proof of Theorem \ref{thm.1}} \label{sec.proof}
 \def\fr{{\mathrm{fr}}}
 
 Here we prove Theorem \ref{thm.1}, which for convenience of the reader is reformulated here.
 \begin{theorem}\label{thm.1a}
 	Suppose $M$ is handlebody of genus $g\ge 1$ and $\Sigma$ is  the boundary of $M$. Assume that the ground ring $\cR$ is a commutative domain with unit and a distinguished invertible element $q$. 
 	
 	The action of $\SS$ on $\SM$ is faithful if and only if $q$ is not a root of 1, ie if and only if $q^n\neq 1$ for all positive integers $n$.
 \end{theorem}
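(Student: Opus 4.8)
The plan is to prove the two directions separately, the easier being the ``only if'' part. Suppose $q$ is a root of unity of order $N$. By the work of \cite{BW1,Le:Che}, if $K$ is a framed knot in the handlebody $H$ and $T_N(K)$ denotes the result of threading $K$ by the Chebyshev polynomial $T_N$ of type~2, then $T_N(K)$ is transparent: it can be pushed across any other skein without changing the value in $\cS(H)$. Concretely, choose a core curve $\gamma$ of one of the handles, push a parallel copy of it slightly into $\Sigma\times(0,1)$ to get a curve $\gamma_\Sigma\in B(\Sigma)$, and consider $z:=T_N(\gamma_\Sigma)-T_N(\gamma_H)*1\in\SS$ — more precisely, $z$ is the element of $\SS$ obtained by threading $\gamma_\Sigma$ with $T_N$ minus the scalar $\cR$-coefficient times the empty link, arranged so that $z*1=0$ in $\cS(H)$. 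One must check that $z\neq 0$ in $\SS$: this follows because $T_N(\gamma_\Sigma)$ expands in the basis $B(\Sigma)$ with a nontrivial term (for instance $\gamma_\Sigma^{(N)}$, the $N$-fold parallel, appears with coefficient $1$), which is not proportional to the empty diagram. Transparency then gives $z*\beta=0$ for every $\beta\in\cS(H)$, so the action is not faithful. Here I am assuming the precise transparency statement and its consequence for the handlebody action, exactly as indicated in the paragraph of the Introduction preceding the theorem.

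For the ``if'' direction, assume $q$ is not a root of unity; we must show $\al*\beta=0$ for all $\beta\in\SH$ forces $\al=0$. The strategy is to pass to the associated graded objects. Suppose $\al\neq 0$ and let $\bal\in\Gr\SS$ be its symbol, of degree $k$. Since the filtration on $\SF$ is compatible with the action, if $\al*\beta=0$ for all $\beta$ then in particular the induced action of $\Gr\SS$ on $\Gr\SF$ kills $\bal$: for any homogeneous $\bar\beta$, the product $\overline{\al*\beta}$ is either $0$ or equals $\bal\cdot\bar\beta$, and the first alternative can only improve matters, so it suffices to derive a contradiction from $\bal\cdot\bar\beta=0$ for all $\bar\beta\in\Gr\SF$. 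By Proposition~\ref{r.PS2}(b), for $k$ large the element $\hO^k\bal$ is triangular, and since $\Gr\SS$ is a domain (Proposition~\ref{r.PS2}(a)) and $\hO$ is a nonzero element, $\hO^k\bal\neq 0$; moreover $\hO=\overline{\pF}$ acts on $\Gr\SF$ (it lies in $\iota_*(\Gr\SF)$ up to the filtration bookkeeping), so if $\bal$ annihilates all of $\Gr\SF$ then so does the triangular element $\hO^k\bal$. Thus we may assume from the start that $\bal$ is \emph{triangular}, i.e.\ an $\cR$-linear combination $\bal=\sum_i c_i\,\overline{C(\bn_i,\bt_i)}$ with each $(\bn_i,\bt_i)\in\Gamma$ triangular and the $c_i\in\cR\setminus\{0\}$.

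Now I would use the explicit product formula, Proposition~4.9 of \cite{FKL}, reproduced as \eqref{eq.1b}. For a diagram $\beta\subset F$ we have $\mu(\beta)=(\bm,0)$ with all twist coordinates zero; such $(\bm,0)$ is triangular as soon as $\bm\in\Lambda(F,\cE)$, which happens for ``stable'' $\bm$ (e.g.\ $\bm$ large enough on every edge, or $\bm$ coming from a diagram in $F$ — indeed any $\beta\in B(F)$ qualifies). Applying \eqref{eq.1b} with $\bn'=\bm$, $\bt'=0$ gives
\be
\bal\cdot\overline{C(\bm,0)}=\sum_i c_i\,q^{\la\bn_i,\bm\ra_Q+\bt_i\cdot\bm}\;\overline{C(\bn_i+\bm,\bt_i)}.
\ee
The diagrams $C(\bn_i+\bm,\bt_i)$ for distinct $(\bn_i,\bt_i)$ are distinct elements of $B(\Sigma)$, hence their symbols are part of the free basis $\bB(\Sigma)$ of $\Gr\SS$, so the right-hand side vanishes only if every coefficient $c_i\,q^{\la\bn_i,\bm\ra_Q+\bt_i\cdot\bm}$ is zero — impossible since $\cR$ is a domain, $c_i\neq 0$, and $q$ is invertible. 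This already gives the contradiction and completes the proof; the role of ``$q$ not a root of unity'' enters only implicitly, via Proposition~\ref{r.PS2}(a) (the associated algebra being a domain) which is where the root-of-unity hypothesis is genuinely needed. I expect the main obstacle to be the reduction step in the previous paragraph: carefully justifying that annihilation of $\Gr\SF$ is preserved under multiplication by $\hO^k$, and that $\hO^k\bal$ remains in the span of triangular symbols with nonzero coefficients — this requires knowing that $\hO$ acts on $\Gr\SF$ compatibly with \eqref{eq.1b} and that multiplication by the nonzero element $\hO^k$ in the domain $\Gr\SS$ does not collapse distinct basis symbols, which is exactly the content of Proposition~\ref{r.PS2} combined with the freeness of $\bB(\Sigma)$.
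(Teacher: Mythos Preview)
There is a genuine gap in your ``if'' direction. You compute the product $\bal\cdot\overline{C(\bm,0)}$ in $\Gr\SS$ via \eqref{eq.1b} and obtain a combination of the basis elements $\overline{C(\bn_i+\bm,\bt_i)}\in\bB(\Sigma)$, which are indeed linearly independent in $\Gr\SS$. But the hypothesis is that $\bal$ annihilates $\Gr\SF$ under the \emph{action}, not that products in $\Gr\SS$ vanish. To pass from $\Gr\SS$ to $\Gr\SF$ one must further apply $*\,1$, and by \eqref{eq.m5} (proved via Lemma~\ref{lem.1a}) one has $\overline{C(\bn_i+\bm,\bt_i)}*1=(\text{unit})\cdot\overline{S(\bn_i+\bm)}$: the twist coordinate $\bt_i$ is wiped out. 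Hence all terms with the same $\bn_i$ but different $\bt_i$ collapse onto the \emph{same} basis element $\overline{S(\bn_i+\bm)}$ of $\Gr\SF$, and you cannot conclude that the individual coefficients vanish. This is precisely the obstruction the paper handles with Proposition~\ref{r.r1} and Lemma~\ref{r.VD}: after grouping by $\bn_i$, the coefficient of $\overline{S(\bn_{j_0}+\bm)}$ is $\sum_{j:\,\bn_j=\bn_{j_0}} c'_j\, q^{2\bt_j\cdot\bm}$, and one needs this to vanish for \emph{all} $\bm\in\Lambda$; a Vandermonde argument then forces each $c'_j=0$, and it is exactly here --- and only here --- that ``$q$ not a root of unity'' is invoked. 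Your claim that the hypothesis enters only through Proposition~\ref{r.PS2}(a) is incorrect: that result (from \cite{PS2}) holds for arbitrary $q$.

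A smaller issue in the ``only if'' direction: the curve you want is one that is nontrivial on $\Sigma$ but \emph{bounds a disk} in $H$ (a meridian of a handle), not a core/longitude. Such a curve becomes trivial in $H$, so threading it by $T_N$ yields the scalar $2(-1)^N$, and $T_N^{\mathrm{fr}}(\alpha)-2(-1)^N\in\SS$ is a nonzero element acting as zero by transparency. A core curve does not bound in $H$, so your $z$ as described is not an element of $\SS$.
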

 
 \subsection{Roots of 1} 
Define the Chebyshev polynomial $T_n(z) \in \BZ[z]$ by the recursion
$$ T_0(z) =2, T_1(z) = z, T_n (z) =z  T_{n-1}(z) - T_{n-2}(z) \ \text{for } \ n \ge 2.$$
The polynomial $T_N(z)$ is characterized by $T_N(u+ u^{-1})= u^N + u^{-N}$.

For a framed knot $\al$ in $M$ and a positive integer $k$ let the $k$-th framed power $\al^{(k)}$ be the disjoint  union of $k$ parallel push-off copies of $\al$, where the push-off is in the direction of the framing of $\al$. If $k=0$ then $\al^{(k)}$ is the empty link by convention. For a polynomial $P(z)= \sum c_i z^i$ and a framed knot $\al$ let $P^\fr(\al) \in \SM$ be 
$$ P^\fr(\al) =\sum c_i \al^{(i)},  $$
considered as an element of $\SM$. Moreover, suppose $\beta$ is a framed link disjoint from $\al$.
Presenting each $\al^{(k)}$  as  framed link in a small neighborhood of $\al$, let 
$$ P^\fr(\al) \cup \beta := \sum c_i [ \al^{(i)} \cup \beta], $$
which is well-defined as an element of $\SM$, and depends only on the isotopy class of $\al\cup \beta$.

We have the following tranparency property of 
the element $(T_{N})^\fr(\al)$ when  $q^N=1$.

\bpro[Corollary 2.7 of \cite{Le:Che}] \label{r.trans}
Suppose $q^N=1$ for a positive integer $N$. Assume $\al$ and $\al$ are two
isotopic framed knots in $M$, and $\beta$ is a framed link disjoint from both $\al$ and $\al'$.
Then 
\be  (T_N)^\fr(\al) \cup \beta  = (T_N)^\fr(\al') \cup \beta.
\label{eq.trans}
\ee
\epro 
 
In picture, Identity \eqref{eq.trans} is depicted in Figure \ref{fig:Trans}.

\FIGc{Trans}{Transparency of $(T_N)^\fr(\al)$. Here  the vertical line is part of $\al$ to which $T_n$ is applied,  and the horizontal line is part of another framed link.}{1.8cm}

The transparency property in the 3-manifold case, Proposition \ref{r.trans},  is a generalization of similar property for links in the thickened surfaces \cite{BW1}. For further generalizations and applications see \cite{LP,BL}.
 \begin{proof}[Proof of Theorem \ref{thm.1} for the case $q$ is a root of 1] Suppose $q^N=1$ for a positive integer $N$. Choose a simple loop $\al$ on $\Sigma$ which is non-trivial in $\Sigma$ but bounds a disk in $H$.
 Since $\al$ is isotopic to a trivial loop $\gamma$ in $H$, the transparency property of Proposition \ref{r.trans} shows that for all $a\in \SH$,
 \be  [(T_N)^\fr(\al) - (T_N)^\fr(\gamma) ] * x =0 \label{eq.tra}
 \ee
 As $\gamma$ is trivial, we have $\gamma = -q^2 -q^{-2}$ in $\SH$, and hence
  $$T_N(\gamma)= (-q^2)^N + (-q^{-2})^N= 2(-1)^N.$$ 
Since the elements $\al^{(k)}$ for $k=0,1,2,\dots$ are distinct elements of the basis $B(\Sigma)$, the element 
$a:=(T_N)^\fr(\al)- 2(-1)^N $ is non-zero in $\SS$. Identity \eqref{eq.tra} shows that $a$ acts trivially on $\SH$. This shows the action of $\SS$ on $SH$ is not faithful.
 \end{proof}
 
 The remaining part is devoted to a proof for the case when $q$ is not a root of 1.

 \subsection{A lemma on root of 1} The following lemma is  where we use the fact that $q$ is not a root of 1.
 \blem \label{r.VD} Assume that  $r, k$ be  positive integers, and $q\in \cR$ is not a root of 1.  Suppose that
 $c_i\in \cR$ and $\bn_i\in \BZ^r$ for $i=1,2,\dots, k$, where the $\bn_i$ are pairwise distinct.
Let $\Lambda\subset \BZ^r$ be a submonoid generating a subgroup of rank $r$ in $\BZ^r$. If \be \sum_{i=1}^k c_i q^{\bn_i \cdot \bm}=0
\label{eq.kk}
\ee for all $\bm \in \Lambda$ then $c_i=0$ for all $i=1,\dots, k$.

\elem
\begin{proof} 
For a non-zero element $x$ in the standard Euclidean space $\BR^r$ let 
$$x^\perp=\{ y\in \BR^r \mid x \cdot y=0\},$$
 which is a subspace of $\BR^r$ of dimension $r-1$. The finite collection $(\bn_i- \bn_j)^\perp, i\neq j$, cannot cover the whole $\BR^r$. The condition on the rank of $\Lambda$ shows that there is $\bm \in \Lambda$ not belonging to any of $(\bn_i-\bm_j)^\perp$. This means the numbers $s_i := \bn_i  \cdot \bm$ are pairwise distinct.

Replace $\bm $ by $j \bm$ for $j=0,1,\dots, k-1$ in \eqref{eq.kk}, we get
$$ \sum_{i=1}^k c_i (s_i)^j =0, \qquad j=0,1,\dots, k-1.$$
One can solve for $c_i$ from the above linear system. The determinant is the 
Vandermonde determinant, which is non-zero since the $s_i$ are disctinct. It follows that all $c_i$ are 0.
\end{proof}
 
 \subsection{Main technical result} Let $\Sigma$ be a closed surface of genus $g\ge 1$. We assume $\Sigma=\pH$, where $H= F \times [0,1]$, as in Subsection \ref{sec.action}. Choose a $\pF$-triangulation $\cE$ of $F$. Thus  $(F, \cE)$ is 
   a Dehn-Thurston datum $(F, \cE)$ of $\Sigma$. The set $B(\Sigma)$ of isotopy classes of simple diagrams  on $\Sigma$ is parameterized by the submonoid $\Gamma \subset \BZ^{\cE \cup \cE^*}$. For $(\bn,\bt)\in \Gamma$, where $\bn\in \BZ^\cE$ and $\bt\in \BZ^{\cE^*}$,  recall that $C(\bn, \bt)$ is the simple diagram with coordinates $(\bn, \bt)$. The set $B(F)$ of simple diagram on $F$ is parameterized by the submonoid $\Lambda\subset \BN^\cE$. For $\bm \in \BN^\cE$ recall that $S(\bm)$ is the diagram on $F$ with coordinates $\bm$. Also we identify $\SH$ with $\SF$.
 
As explained in Subsection \ref{sec.fil} we have $\BN$-filtrations on $\SS$ and $\SF$, and the corresponding associated graded algebra $\Gr \SS$ and associated graded   module $\Gr \SF$. 
  For a non-zero element $x$ we defined $\bar x$ in the associated modules.

The main technical result is the following, which calculate the geometric action of the associated algebra $\Gr\SS$ on the associated module $\Gr\SF$.
 
 \bpro \label{r.r1}
  Suppose $(\bn,\bt)\in \Gamma$ is triangular and $\bm \in \Lambda$. One has
 \be 
\overline {C(\bn, \bt)} *  \overline { S(\bm)} = u(\bn,\bt)  q^{2 \bt\cdot \bm}  q ^{\la  \bn, \bm \ra_Q } \overline { S(\bn+\bm)}.
\label{eq.main}
\ee
where $u(\bn,\bt)= (-q^2)^{\la \bt \ra}  q^{ \bt \cdot  \bn }  $ is a unit in $R$. Here $\la \bt \ra = \sum_{e\in \cE} \bt(e^*)$ and $\la  \bn, \bm \ra_Q$ is defined by Equation~\eqref{eq.Q}.
 
 \epro

 First we prove a lemma, which deals with modifications of  twist coordinates.
 \blem \label{lem.1a}
 Suppose $(\bn,\bt)\in \Gamma$ is triangular. 
 Fix  $e\in \cE$ and modify $\bt$ to $\bt'$ so that $\bt'(e^*)=0$ while $\bt'(c^*) = \bt(c^*)$ for $c\neq e$. Then
\be 
\overline {C(\bn, \bt)} *  1  = (-q^{\bn(e)+2})^{\bt(e)}\, \,  \overline {C(\bn, \bt')} *  1 .
\label{eq.main2}
\ee
\elem
\begin{proof} If $\bt(e)=0$ then $\bt=\bt'$ and the lemma is trivial. 
First assume $\bt(e) > 0$. Present the annulus $A(\tte)$, a closed tubular neighborhood of the simple closed curve $\tte$, as $\tte\times [0,\bt(e)]$ such that for each  $k=0,1,\dots, \bt(e)-1$, the intersection $C(\bn,\bt)\cap   (\tte \times [k,k+1])$ is presented by diagram $x$ depicted in Figure \ref{fig:Twist3}, where we also describe diagrams $y$ and $z$ on $F$.

\FIGc{Twist3}{Diagram $x$ on $\Sigma$ and diagrams $y,z$ on $F$. The vertical red lines are boundary of $F$.}{2cm}

 Considering $x$ as an element in $H$,  we see that $x$ is isotopic to the diagram $y$ on $F$. Note that $y$ has one positive self-crossing and $n-1$ other crossings. Removing the self-crossing results in a factor $-q^3$, see Equation \eqref{eq.kink}.
  For each of the remaining $n-1$ crossing, there is only one smooth resolution, namely the $+1$ resolution, which does not decrease the geometric intersection with $\tte$. As a result, when applying Formula \eqref{eq.prod} to diagram $y$,  modulo elements whose geometric intersections with $e$ are less than $\bt(e)$, we have
$$y= (-q^3) q^{n-1} z= - q^{n+2} z,$$
where $z$ is the diagram presented by $n$ vertical lines, see Figure \ref{fig:Twist3}. Doing this to each $k=0,1,\dots, t-1$,  we get 
Identity \eqref{eq.main2}. 

The case $\bt(e) <0$ is similar, where all the crossings now should have $-1$ resolution. \end{proof}

By applying Lemma \ref{lem.1a} to all $e\in \cE$ we get
 \begin{align}
 \overline {C(\bn, \bt)} * 1  & = (-q^2)^{\la \bt \ra}  q^{\bt\cdot \bn}\,  \overline {C(\bn, \mathbf 0 )} *  1
 \notag  \\
& =  
(-q^2)^{\la \bt \ra}  q^{\bt\cdot \bn}\,   \,  \overline {S(\bn)}.  \label{eq.m5}
\end{align}

 \begin{proof}[Proof of Proposition \ref{r.r1}]
 
Since $\overline { S(\bm)} =   \overline { C(\bm, \mathbf 0)} *1 $, we have

\begin{align*}
\overline {C(\bn, \bt)} *  \overline { S(\bm)} &  =  \overline {C(\bn, \bt)} \, \,   \overline { C(\bm, \mathbf 0)} *1 \\
&  = 
q^{\la \bn, \bm\ra_Q + \bt\cdot \bn }  \,  \overline {C(\bn+\bm, \bt)} * 1  \qquad \text{ by \eqref{eq.1b}   }
\\
&= q^{\la \bn, \bm\ra_Q + \bt\cdot \bn }  \, (-q^2)^{\la \bt \ra} q ^{ \bt \cdot \bn + \bt \cdot \bm}\,  \overline { S(\bn + \bm)}  \qquad \text{ by \eqref{eq.m5}   },
\end{align*} 
which is Identity \eqref{eq.main} that we need to prove.
\end{proof}

\subsection{Proof of Theorem \ref{thm.1}, the case $q$ is not a root of 1}
\begin{proof} If the associated action of $\Gr \SS$ on $\Gr \SH$ is faithful, then so is the action of $\SS$ on $\SH$. Hence it is enough to show that the associated action is faithful.

Assume that a non-zero $x\in \Gr\SS$ acts trivially on $\Gr \SH$. By Proposition \ref{r.PS2}, there is a positive integer $k$ such that $y:=\Omega^k x$ is triangular. Since $\Gr \SS$ is a domain by Proposition \ref{r.PS2}, we have $y\neq 0$. Clearly $y * \Gr \SF=0$.

Since $y$ is triangular, we have the standard presentation  $y = \sum_{j\in J} c_j \overline{C(\bn_j, \bt_j)}$, where $J$ is a non-empty set, the $(\bn_j, \bt_j)$ are triangular and  pairwise distinct, and each $c_j$ is a non-zero element of $\cR$. For  $\bm \in \Lambda$
we have
\begin{align}
0 & = y *\overline{ S(\bm)}  =   \sum_{j\in J} c_j   \, \overline{C(\bn_j, \bt_j)}  *  \overline{S(\bm)} \notag  \\
&=  \sum_{j\in J} c_j' q^{2 \bt_j \cdot \bm} q^{\la \bn_j, \bm\ra_Q} \overline{S(\bn_j +\bm})
\label{eq.8a}
\end{align}
where we have used Identity \eqref{eq.main} to get Identity \eqref{eq.8a}, and $c'_j:= c_j u(\bn_j, \bt_j)$. Since $u(\bn_j, \bt_j)$ is a unit, we have that $c'_j\neq 0$ for all $j\in J$.

Choose $j_0\in J$ and let $J'= \{ j \in J \mid \bn_j = \bn_{j_0}\}$. As the
set $\{ \overline{S(\bk)} \mid  \bk \in \Lambda\}$ is a free basis of $\Gr \SF$, the coefficient of $\overline{S(\bn_{j_0} +\bm})$ in the right hand side of Equation \eqref{eq.8a} must be 0. Hence we have 
$$0=  \sum_{j\in J'} c_j' q^{2 \bt_j \cdot \bm} q^{\la \bn_{j_0}, \bm\ra_Q} = q^{\la \bn_{j_0}, \bm\ra_Q} \,  \sum_{j\in J'} c_j' q^{2 \bt_j \cdot \bm}. $$
It follows that for all $\bm \in \Lambda$ one has $\sum_{j\in J'} c_j' q^{2 \bt_j \cdot \bm}=0$.
Lemma \ref{r.VD} shows that all $c'_j$ are 0, a contradiction. This completes the proof of Theorem \ref{thm.1}.
\end{proof}


\end{document}